\documentclass[12pt,reqno]{amsart}

\usepackage{amssymb,latexsym}

\usepackage{enumerate}
\allowdisplaybreaks
\usepackage[french,english]{babel}
\usepackage{amsmath}
\usepackage{graphicx}
\usepackage{amssymb}
\usepackage{bbm}
\usepackage{amsthm,mathtools}
\usepackage{ulem}
\usepackage{geometry}
\usepackage{tikz-cd}
\usepackage{mathrsfs}
\usepackage[colorinlistoftodos]{todonotes}
\usepackage{enumitem}
\usepackage{verbatim}
\usepackage[foot]{amsaddr}
\usepackage{dsfont}
\usepackage{cite}
\usepackage[T1]{fontenc}

\makeatletter

\@namedef{subjclassname@2010}{
	
	\textup{2020} Mathematics Subject Classification}

\makeatother
\newtheorem{thm}{Theorem}[section]
\newtheorem*{thm*}{Theorem}

\newtheorem{lem}[thm]{Lemma}

\theoremstyle{definition}

\numberwithin{equation}{section}

\newcommand{\bg}{\big}

\newcommand{\bgg}{\bigg}

\newcommand{\Bg}{\Big}

\newcommand{\lo}{\log_2}
\newcommand{\lt}{\log_3}

\newcommand{\mbr}{\mathbb{R}}

\newcommand{\mcd}{\mathcal{D}}
\newcommand{\mce}{\mathcal{E}}

\newcommand{\mcm}{\mathcal{M}}

\newcommand{\F}{\mathcal{F}}

\newcommand{\mmd}{\mathrm{d}}
\newcommand{\mme}{\mathrm{e}}
\newcommand{\mmi}{\mathrm{i}}

\newcommand{\mdt}{\mathrm{d}t}

\newcommand{\whp}{\widehat{\Phi}}

\usepackage{hyperref}
\hypersetup{colorlinks=true,linkcolor=blue,anchorcolor=blue,citecolor=blue}
\usepackage{color}
\newcommand{\newabstract}[1]{%
	\par\bigskip
	\csname otherlanguage*\endcsname{#1}%
	\csname captions#1\endcsname
	\item[\hskip\labelsep\scshape\abstractname.]
}

\begin{document}

	\baselineskip=17pt

	\title[Z. Dong, W. Wang, H. Zhang and S. Zhao]{Extreme values of quadratic Dirichlet $L$-functions with prime-related moduli}

	\author{Zikang Dong\textsuperscript{1}}
    \author{Weijia Wang\textsuperscript{2}}
    \author{Hao Zhang\textsuperscript{3}}
    \author{Shengbo Zhao\textsuperscript{4}}
    
	\address{1. School of Mathematical Sciences, Soochow University, Suzhou 215006, P. R. China}
    \address{2. School of Mathematics, Shandong University, Jinan 250100, P. R. China}
    \address{3. School of Mathematics, Hunan University, Changsha 410082, P. R. China}
	\address{4. School of Mathematical Sciences, Key Laboratory of Intelligent Computing and Applications (Tongji University), Ministry of Education, Tongji University, Shanghai 200092, China}
	\email{zikangdong@gmail.com}
    \email{weijiawang@amss.ac.cn}
	\email{zhanghaomath@hnu.edu.cn}
	\email{shengbozhao@hotmail.com}

	\begin{abstract} 
	   In this paper, we show a new lower bound for extreme values of quadratic Dirichlet $L$-functions with prime-related moduli, which generalizes the work of Darbar and Maiti in 2025, and sharpens a recent work of Gao in 2026.
	\end{abstract}
	
    \keywords{Dirichlet $L$-functions, extreme values, the resonance method, character sums}
	
	\subjclass[2020]{Primary 11L40, 11M06, 11N56.}
	
	\maketitle

\section{Introduction}

Throughout this paper, we write $\log_j$ for the $j$-th iterated logarithm, such as \(\log_2x= \log\log x\), and \(\log_3 x=\log\log\log x\). Let $q$ always represent an odd prime.

In \cite{soundararajan2008mathann}, Soundararajan established lower bounds for extreme values for the Riemann zeta function, the quadratic Dirichlet $L$-functions, and the $L$-functions for the cusp forms. For the Riemann zeta function, he showed for large $T$
$$\max_{\substack{T<|t|\le2T}}\big|\zeta(\tfrac12+it)|\ge\exp\bigg(\big(1+o(1)\big)\sqrt{\frac{\log T}{\log_2T}}\bigg).$$
In 2017, Bondarenko and Seip \cite{bondarenko2017Duke} improved this result to
$$\max_{\substack{0<|t|\le T}}\big|\zeta(\tfrac12+it)|\ge\exp\bigg(\big(\tfrac1{\sqrt2}+o(1)\big)\sqrt{\frac{\log T\log_3T}{\log_2T}}\bigg).$$
Their breakthrough was based on the connection between the Riemann zeta function and GCD sums, which was first observed by Aistleitner \cite{aistleitner2016MathAnn}. The constant $\frac1{\sqrt2}$ was subsequently improved to $1$ by  Bondarenko and Seip \cite{bondarenko2018argument}, and then to $\sqrt2$ by La Bret\`eche and Tenenbaum \cite{tenen2019galsum}.

Now let \(\F\) denote the set of all fundamental discriminants
and \(\chi_d := \big(\frac{d}{\cdot} \big)\) be the real primitive character modulo \(|d|\). A fundamental discriminant \(d\) is either square-free with \(d \equiv 1 \pmod 4\), or of the form \(d =4N\), where \(N\) is square-free, and \(N \equiv 2~\text{or}~3 \pmod 4\). Hence, we have
\[
\{8q: q~ \text{is an odd prime}\} \subset \F.
\]
For quadratic Dirichlet $L$-functions at the central point, Soundararajan \cite[Theorem 2]{soundararajan2008mathann} showed that, for sufficiently large $X$,
$$ 
\max_{\substack{X<|d|\le2X \\ d \in \F}}\big|L(\tfrac12,\chi_d)|\ge\exp\bigg(\big(\tfrac1{\sqrt5}+o(1)\big)\sqrt{\frac{\log X}{\log_2X}}\bigg).
$$
Fan, Hua, and Xie \cite{FHX2026qjmath} extended this result to the family of quadratic Dirichlet \(L\)-functions with prime moduli. They proved that
$$
\max_{\substack{X<|q|\le2X \\q \equiv 1 \pmod 8 }}\big|L(\tfrac12,\chi_q)|\ge\exp\bigg(\big(\sqrt{\tfrac8{45}}+o(1)\big)\sqrt{\frac{\log X}{\log_2X}}\bigg).
$$
More recently, under the Generalized Riemann Hypothesis (GRH), Darbar and Maiti \cite{Darbar2025mathann} showed
   $$
   \max_{\substack{X<|d|\le2X \\ d \in \F}}\big|L(\tfrac12,\chi_d)|\ge\exp\bigg((\tfrac{1}{2}+o(1))\sqrt{\frac{\log X\log_3X}{\log_2X}}\bigg).
   $$
In earlier work, the authors \cite{dong2026arxiv} improved the constant $\frac12$ to $1$.
For the family \(\{\chi_{8q}:q~ \text{is an odd prime}\}\), Gao \cite{Gao2026arxiv} proved that
 \[
    \max_{\substack{ X < |q| \le 2X }}\big| L( \tfrac{1}{2},\chi_{8q}) \big| \ge \exp \bgg( \bg(\tfrac12+o(1)\bg)\sqrt{\frac{\log X \lt X}{\lo X}} \bgg).
    \]
The aim of the present paper is to improve Gao's constant $\frac12$ to $1$.
\begin{thm}
    \label{thm1} 
    Assuming GRH. Then, for sufficiently large \(X\),
    \[
    \max_{\substack{ X < |q| \le 2X }}\big| L( \tfrac{1}{2},\chi_{8q}) \big| \ge \exp \bgg( \bg(1+o(1)\bg)\sqrt{\frac{\log X \lt X}{\lo X}} \bgg).
    \]
\end{thm}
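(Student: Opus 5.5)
We use the resonance method with a GCD-type resonator, adapted to the family $\{\chi_{8q}\}$. We follow the route the authors used in \cite{dong2026arxiv} for the full family of fundamental discriminants, replacing the sum over $d\in\F$ by a sum over primes $q$ weighted by $\log q$ and a smooth function $\Phi$ supported in $(1,2)$.

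\textbf{Resonator.} Let $\lambda = \sqrt{\log X \lt X/\lo X}$. Take a multiplicative function $r$ supported on squarefree integers composed of primes $p\in(\lambda^2\ldots]$ roughly in the range $e\lo X \cdot \lambda^2 < p\le \exp((\lo X)^{1/2})\lambda^2$, with $r(p)=\lambda/(\sqrt p\log p)$ (the Bondarenko--Seip choice). Truncate to $n\le N=X^{\theta}$ for a small $\theta$ via Rankin's trick, and set $R(q)=\sum_{n\le N} r(n)\chi_{8q}(n)$.

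\textbf{Moments.} Compare
$$M_1=\sum_{q}\log q\,\whp(q/X)\,R(q)^2, \qquad M_2=\sum_q \log q\,\whp(q/X)\,L(\tfrac12,\chi_{8q})R(q)^2,$$
where $\whp$ is chosen nonnegative. We may also use $|L|$ with an extra factor $L(\tfrac12,\chi_{8q})$, since the maximum of $|L|$ dominates the ratio $M_2/M_1$.

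For $M_1$, expand $R^2$. Under GRH, the sum $\sum_q \log q\,\whp(q/X)\chi_{8q}(m)$ equals $\widehat{\Phi}(0)X$ times the indicator that $m$ is a square (up to the factor coprime to $2$), plus an error $O(X^{1/2+\epsilon}m^\epsilon)$. Hence $M_1\approx X\sum_{mn=\square} r(m)r(n)=X\sum_n r(n)^2$, with the error controlled by $N^2X^{1/2+\epsilon}$ and $\sum r(n)\le N^{1/2}(\ldots)$.

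For $M_2$, use an approximate functional equation $L(\frac12,\chi_{8q})=2\sum_k \chi_{8q}(k)k^{-1/2}V(k/\sqrt{8q})$. The prime sum over $q$ is again evaluated by the GRH explicit formula for $\sum_q\log q\,\chi_{8q}(kmn)$. Since $\chi_{8q}(k)$ depends on $q$ through quadratic reciprocity, $k$ ranges only up to about $X^{1/2+\epsilon}$, so the GRH error $X^{1/2+\epsilon}$ times $\sum_k k^{-1/2}$ gives roughly $X^{3/4+\epsilon}N$. This is acceptable for small $\theta$. The main term then comes from $kmn=\square$:
$$M_2\approx X\sum_{m,n}r(m)r(n)\sum_{k:\,kmn=\square} k^{-1/2}V(k/\sqrt X).$$
Write $m=ga$, $n=gb$ with $(a,b)=1$; then $k=ab\,\ell^2$. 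Keeping the $\ell=1$ term (all terms are nonnegative), the ratio is
$$\frac{M_2}{M_1}\gg \frac{\sum_{a,b,g} r(g)^2 r(a)r(b)(ab)^{-1/2}}{\sum_n r(n)^2},$$
which is a GCD-type quantity restricted to $ab\le X^{1/2-\epsilon}$. With the choice of $r$ above, it is $\ge\prod_p\bigl(1+2r(p)/\sqrt p\bigr)/(1+r(p)^2)$ up to truncation losses, and equals $\exp((1+o(1))\lambda)$ by the standard computation $\sum_p r(p)/\sqrt p = \lambda\sum_p 1/(p\log p)$ over the prime range, giving $(1+o(1))\lambda$ after constant adjustment.

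\textbf{Main obstacle.} The constant $1$ rather than $\frac12$ comes from the factor $2$ in $2r(p)/\sqrt p$, i.e.\ from using both $a$ and $b$ freely. The difficulty is that the constraint $ab\le \sqrt{X}$ and the truncation $N$ must not cost a constant factor. Gao's weaker constant came from effectively restricting the length. We would handle this as in \cite{dong2026arxiv}: bound the discarded tail with $ab>X^{1/2-\epsilon}$ by Rankin's trick, multiplying by $(ab/X^{1/2-\epsilon})^{\delta}$ with $\delta\approx 1/\lo X$. The primes used are only of size $\exp(O(\sqrt{\lo X}))\lambda^2$, so the Rankin factor inflates the Euler product by only $\exp(o(\lambda))$ while saving $X^{-\delta/2}=\exp(-\log X/(2\lo X))$, which beats $\exp(O(\lambda))$. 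A second technical point is making the GRH prime-sum error uniform in the modulus $8kmn$ with $kmn$ up to $X^{1/2+2\theta}$. This requires a smooth prime sum with the $\log$ weight and handling the terms where $kmn$ is divisible by $q$, but these are negligible.
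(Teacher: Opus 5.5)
Your proposal has a genuine gap. The resonator you specify does not give $\exp((1+o(1))\lambda)$, and truncating to $n\le N=X^{\theta}$ with small $\theta$ cannot give the constant $1$ for any choice of weights.

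\emph{(i) The Euler product is miscomputed.} On your prime range, $\log p=\lo X+O(\sqrt{\lo X})$. Hence $\sum_p 1/(p\log p)\asymp(\lo X)^{-3/2}$ and $\prod_p(1+2r(p)/\sqrt p)=\exp(O(\lambda(\lo X)^{-3/2}))$. No ``constant adjustment'' repairs a power of $\lo X$. The actual Bondarenko--Seip weight has the shifted logarithm $\log p-\lo N-\lt N$ in the denominator.

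\emph{(ii) Truncating the support destroys the $\lt$ gain.} With correct Bondarenko--Seip type weights, $\sum_p r(p)^2\log p\asymp\log N\,\lo N$. So the mass of $\sum_n r(n)^2$ sits on integers with $\log n\asymp\log N\,\lo N$, and a Rankin truncation to $n\le N$ discards essentially all of it. The $\lt$ gain in these constructions comes precisely from integers far larger than $N$.

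\emph{(iii) Small $\theta$ gives a small constant.} Even with an optimal resonator, the ratio is governed by GCD sums over $N$ integers. Their exponent is proportional to $\sqrt{\log N}=\sqrt{\theta\log X}$; Lemma~\ref{GCD} gives $2\sqrt{\log N\lt N/\lo N}$. So small $\theta$ yields a constant of order $\sqrt\theta$, not $1$. Also, the obstacle you single out, $ab\le X^{1/2}$, never arises for your resonator: $m,n\le N<X^{1/4}$ already forces $ab\le N^2$.

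The missing idea, which is what the paper uses, is to measure the resonator's length by its number of terms rather than by the size of its support. Take $\mcm$ extremal for the Gál sum in Lemma~\ref{GCD}, with $|\mcm|=N=\lfloor X^{1/4-\delta}\rfloor$, and set $R_q=\sum_{m\in\mcm}\chi_{8q}(m)$.

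\emph{Error terms.} The elements of $\mcm$ are huge, but they are square-free with $P_+(m)\le(\log N)^{1+o(1)}$, so $\log m\le(\log N)^{1+o(1)}$. The GRH error in Lemma~\ref{charsum} depends on $m,n$ only through $\log(kmn+2)\ll X^{\varepsilon}$. The errors are therefore $O(X^{1/2+2\varepsilon}N^2)$ and $O(X^{3/4+2\varepsilon}N^2)$, which permit $N$ right up to $X^{1/4-\delta}$.

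\emph{Where Rankin is needed.} Because the elements are huge, $[m,n]/(m,n)$ may exceed $\sqrt X$. Keep only $k=[m,n]/(m,n)\le X^{\varepsilon}$, where $\omega=1+o(1)$. Bound the remaining pairs by $X^{-\varepsilon/6}\sum_{m,n\in\mcm}((m,n)/[m,n])^{1/3}\le X^{-\varepsilon/6}N\exp(y_\mcm^{2/3})=o(N)$.

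\emph{The constant.} It comes out as $2\sqrt{1/4-\delta}\to 1$: the sharp Gál constant $2$ times $\sqrt{\log N/\log X}$ at the largest admissible length. It does not come from the factor $2$ in $2r(p)/\sqrt p$ alone.
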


\section{Preliminaries}
\label{sec-pre}
In this section, we present several lemmas. We begin with an approximate functional equation for quadratic Dirichlet \(L\)-functions.
\begin{lem}
    \label{appox}
        For any odd prime \(q\), we have
        \[
        L( \tfrac{1}{2},\chi_{8q}) = 2\sum_{n \ge 1} \frac{\chi_{8q}(n)}{\sqrt{n}}\omega\Big( \frac{n}{\sqrt{q}}\Big),
        \]
        where for any real number \(\xi>0\),
        \[
        \omega(\xi) := \frac{1}{2\pi\mmi}\int_{(2)} \Bg(\frac{8}{\pi}\Bg)^{s/2}\frac{\Gamma(s/2+1/4)}{\Gamma(1/4)}\xi^{-s}\frac{\mmd s}{s}.
        \]
        The function $\omega(\xi)$ is real-valued and smooth on $(0, \infty)$. We have $\omega(\xi)=1+O(\xi^{1/2-\varepsilon})$ as $\xi \to 0^+$, and $\omega^{(j)}(\xi) \ll \mme^{-\xi}$ for \(j \ge 0\) as $\xi \to \infty$. Moreover, \(\omega(\xi)>0\) and \(\omega^\prime(\xi) <0 \) for \(\xi>0\).
\end{lem}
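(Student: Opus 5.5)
We follow the standard route: the functional equation for $L(s,\chi_{8q})$, combined with a Mellin contour shift.

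\emph{Step 1 (functional equation).} Since $q$ is an odd prime, $8q$ is a positive fundamental discriminant. Hence $\chi_{8q}$ is a real, even, primitive character of conductor $8q$ with root number $1$. The completed function
\[
\Lambda(s)=\Big(\frac{8q}{\pi}\Big)^{s/2}\Gamma\Big(\frac s2\Big)L(s,\chi_{8q})
\]
is therefore entire and satisfies $\Lambda(s)=\Lambda(1-s)$. We consider
\[
I=\frac{1}{2\pi\mmi}\int_{(2)}\frac{\Lambda(\tfrac12+s)}{\Gamma(1/4)}\,\frac{\mmd s}{s}.
\]
Expanding $L(\tfrac12+s,\chi_{8q})$ as a Dirichlet series and integrating term by term gives
\[
I=(8q/\pi)^{1/4}\sum_{n}\frac{\chi_{8q}(n)}{\sqrt n}\,\omega\Big(\frac{n}{\sqrt q}\Big).
\]
This works because $(8q/\pi)^{s/2}n^{-s}=(8/\pi)^{s/2}(n/\sqrt q)^{-s}$, which matches the definition of $\omega$.

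Next we shift the contour to $\mathrm{Re}(s)=-2$. This is justified by Stirling's formula: $\Gamma$ decays exponentially in vertical strips, while $L$ grows at most polynomially. The only pole crossed is at $s=0$, with residue $\Lambda(\tfrac12)/\Gamma(1/4)=(8q/\pi)^{1/4}L(\tfrac12,\chi_{8q})$. On the new line we substitute $s\to -s$ and use $\Lambda(\tfrac12-s)=\Lambda(\tfrac12+s)$. The remaining integral is then exactly $-I$, so
\[
(8q/\pi)^{1/4}L(\tfrac12,\chi_{8q})=2I.
\]
Dividing by $(8q/\pi)^{1/4}$ gives the stated formula.

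\emph{Step 2 (analytic properties of $\omega$).} Realness follows because the integrand takes conjugate values at $\bar s$. Smoothness follows by differentiating under the integral sign, which rapid Gamma decay permits.

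For $\xi\to0^+$, we shift the contour to $\mathrm{Re}(s)=-\tfrac12+\varepsilon$. This picks up the residue $1$ at $s=0$; the next pole of $\Gamma(s/2+1/4)$ is at $s=-1/2$. The remaining integral is $O(\xi^{1/2-\varepsilon})$.

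For $\xi\to\infty$, we substitute $s=2u$ and use Mellin inversion. This shows that
\[
\omega(\xi)=\frac{1}{\Gamma(1/4)}\int_{x}^\infty t^{-3/4}\mme^{-t}\,\mmd t,\qquad x=\frac{\pi\xi^2}{8}.
\]
To verify this, recall that $\int_0^\infty \big(\int_x^\infty t^{-3/4}\mme^{-t}\mmd t\big)x^{u-1}\mmd x=\Gamma(u+1/4)/u$ by Fubini. This representation yields all the remaining claims at once. The function $\omega$ is positive, being a normalized incomplete Gamma integral. Its derivative is
\[
\omega'(\xi)=-\frac{1}{\Gamma(1/4)}x^{-3/4}\mme^{-x}\frac{\pi\xi}{4}<0.
\]
All derivatives are bounded by polynomials in $\xi$ times $\mme^{-\pi\xi^2/8}$, which is $\ll\mme^{-\xi}$.

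\emph{Main obstacle.} The step needing the most care is Step 1, in particular getting the normalization right. The factor $2$ arises precisely from the self-duality of $\chi_{8q}$ together with root number $1$. The real work is confirming that $8q$ is a fundamental discriminant, so that the character is primitive of conductor $8q$, and that the character is even; the even case is what produces $\Gamma(s/2)$, and hence $\Gamma(s/2+1/4)$ after the shift by $\tfrac12$. Once this is settled, the rest is routine contour shifting.
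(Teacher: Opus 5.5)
Your proof is correct. The paper does not prove this lemma itself; it only cites Gao's Lemma 2.1, Soundararajan's Lemmas 2.1--2.2 and Darbar--Maiti's Lemma 7, and your argument is the standard derivation behind those references: the functional equation for the even primitive character attached to the positive fundamental discriminant $8q$ with root number $1$, the contour shift to $\mathrm{Re}(s)=-2$ using $\Lambda(\tfrac12-s)=\Lambda(\tfrac12+s)$ to produce the factor $2$, and the closed form $\omega(\xi)=\Gamma(\tfrac14)^{-1}\int_{\pi\xi^2/8}^\infty t^{-3/4}\mme^{-t}\,\mmd t$, which gives positivity, monotonicity and the decay bounds at once.
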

\begin{proof}
    This comes from \cite[Lemma 2.1]{Gao2026arxiv}, and it follows from \cite[Lemmas 2.1 and 2.2]{soundararajan2000annmath}; see also \cite[Lemma 7]{Darbar2025mathann}.
\end{proof}

As in \cite{soundararajan2008mathann,Gao2026arxiv}, let \(\Phi\) denote a smooth function compactly supported in \([1,2]\) satisfying \(\Phi(t) \in [0,1]\) and \(\Phi(t)=1\) for \(t \in [5/4,7/4]\). For any complex number \(s\), let \(\whp(s)\) denote the Mellin transform of \(\Phi\), defined by
\[
\whp(s) := \int_0^\infty \Phi(x)x^s\frac{\mmd x}{x}.
\]
We write \(n=\square\) to indicate that \(n\) is a perfect square. Throughout, \(\varepsilon>0\) denotes an arbitrarily small constant, not necessarily the same at each occurrence. We shall use the following conditional estimate for sums of quadratic characters.
\begin{lem}
    \label{charsum}
    Assuming GRH. Let \(c\) be a positive odd integer. Then for sufficiently large \(X\), we have
    \[
    {\sum_{q}}^\ast (\log q)\chi_{8q}(c) \whp\Bg(\frac{q}{X}\Bg) = \mathds{1}_{c=\square} \whp(1)X+O\big(X^{1/2+\varepsilon}\log(c+2)\big).
    \]
    Here, $\mathds{1}_{n=\square}$ indicates the indicator function of the set of perfect squares. and \({\sum_{q}}^\ast \) denotes a sum over odd primes \(q\), that is, \((q,2)=1\).
\end{lem}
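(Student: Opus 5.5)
Write $c=c_1c_2^2$ with $c_1$ squarefree. Since $c$ is odd, for an odd prime $q\nmid c$ we have $\chi_{8q}(c)=\bigl(\tfrac{8q}{c}\bigr)=\bigl(\tfrac{8}{c}\bigr)\bigl(\tfrac{q}{c}\bigr)$. By quadratic reciprocity, $\bigl(\tfrac{q}{c}\bigr)=\bigl(\tfrac{c}{q}\bigr)(-1)^{\frac{c-1}{2}\frac{q-1}{2}}$. So, after splitting $q$ into the residue classes $1$ and $3 \pmod 4$, the sum becomes a sum over primes of $\psi(q)\Phi(q/X)\log q$. Here $\psi$ is a Dirichlet character of conductor dividing $4c_1$, possibly times a principal character modulo $c$. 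Primes $q\mid c$ contribute $0$ in the character sum. Replacing them would cost $O(\omega(c)\log X)$, which is negligible.

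\textbf{Case $c=\square$.} Here $c_1=1$ and $\chi_{8q}(c)=1$ for all $q\nmid c$. The sum is
\[
\sum_{q}\Phi(q/X)\log q + O(\log c\,\log X).
\]
By the prime number theorem under RH, in the smoothed form $\sum_n\Lambda(n)\Phi(n/X)=\widehat\Phi(1)X+O(X^{1/2+\varepsilon})$, this equals $\widehat{\Phi}(1)X+O(X^{1/2+\varepsilon})$. The prime powers and the prime $2$ cost $O(X^{1/2}\log X)$. (I read the paper's $\whp(q/X)$ as $\Phi(q/X)$.)

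\textbf{Case $c\neq\square$.} Now $c_1>1$, and the characters $\psi$ that arise, namely $n\mapsto \bigl(\tfrac{n}{c_1}\bigr)$ possibly twisted by $\chi_{-4}$ and restricted to $(n,2c)=1$, are non-principal. Write the class condition $q\equiv a\pmod 4$ with the characters mod $4$. The sum then becomes a linear combination, with bounded coefficients, of $\sum_{n}\Lambda(n)\psi'(n)\Phi(n/X)$ for non-principal characters $\psi'$ modulo $4c$, up to a prime-power error $O(X^{1/2}\log X)$. By Mellin inversion,
\[
\sum_n\Lambda(n)\psi'(n)\Phi(n/X)=\frac{1}{2\pi i}\int_{(2)}-\frac{L'}{L}(s,\psi')\widehat\Phi(s)X^s\,ds .
\]
Shift the contour to $\Re s=1/2+\varepsilon$; there is no pole at $s=1$. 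Under GRH, $\frac{L'}{L}(s,\psi')\ll \log(c|s|+2)$ uniformly on $\Re s=1/2+\varepsilon$; the imprimitivity correction is $\ll\sum_{p\mid c}\log p/(p^{1/2}-1)\ll\log c$. The rapid decay of $\widehat\Phi$ then gives $O(X^{1/2+\varepsilon}\log(c+2))$.

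The main obstacle is keeping the dependence on $c$ linear in $\log c$, uniformly with no restriction on the size of $c$. This rests on the GRH bound for $L'/L$ on the line $\Re s = 1/2+\varepsilon$ being $\ll_\varepsilon \log(\text{conductor}\cdot(|t|+2))$. I would cite that bound from standard references (e.g.\ Montgomery--Vaughan), and also track the $2$-adic and reciprocity bookkeeping so that every character produced is genuinely non-principal exactly when $c\ne\square$.
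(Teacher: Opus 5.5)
Your proof is correct, and it differs from the paper's only in that the paper gives no argument at all: it imports \cite[Lemma 2.4]{Gao2026arxiv}. So your proposal is a self-contained replacement rather than a reconstruction of something in the text.

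Your route is the standard one. On odd primes, $q\mapsto\chi_{8q}(c)$ is a Dirichlet character of modulus dividing $4c$, and it is principal exactly when $c=\square$. In the square case, the smoothed prime number theorem under RH gives the main term $\whp(1)X$. In the non-square case, a contour shift to $\Re s=1/2+\varepsilon$ together with the GRH bound $\frac{L'}{L}(s,\psi)\ll_\varepsilon\log\bigl(c(|t|+2)\bigr)$ gives $O(X^{1/2+\varepsilon}\log(c+2))$. That bound is immediate from $\frac{L'}{L}(s,\chi)=\sum_{|\gamma-t|<1}(s-\rho)^{-1}+O(\log q(|t|+2))$, since under GRH each term has size at most $1/\varepsilon$ and there are $O(\log q(|t|+2))$ of them.

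The citation buys brevity. Your version makes clear where the uniform factor $\log(c+2)$ comes from: the conductor in the $L'/L$ bound, plus the $O(\log c)$ corrections from imprimitivity and from primes $q\mid c$. That uniformity matters, because the paper later applies the lemma with $c=mn$ and $c=kmn$ of unbounded size. You were also right to read $\whp(q/X)$ in the statement as $\Phi(q/X)$. Taken literally, the Mellin transform would not restrict the sum to $X\le q\le 2X$, and the main term $\whp(1)X=X\int_0^\infty\Phi$ only makes sense with $\Phi(q/X)$.

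One simplification: the reciprocity step and the split into classes mod $4$ are a detour. For odd $c>0$, $\chi_{8q}(c)=\bigl(\tfrac{8}{c}\bigr)\bigl(\tfrac{q}{c}\bigr)$ is a Jacobi symbol with $c$ in the denominator. So $q\mapsto\bigl(\tfrac{q}{c}\bigr)=\bigl(\tfrac{q}{c_1}\bigr)\mathds{1}_{(q,c_2)=1}$ is already a Dirichlet character modulo $c$, non-principal if and only if $c_1>1$. Applying reciprocity turns it into $\bigl(\tfrac{c}{q}\bigr)$, which you then have to convert back into a character in $q$.
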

\begin{proof}
    This is \cite[Lemma 2.4]{Gao2026arxiv}.
\end{proof}

Let \((m,n)\) and \([m,n]\) denote the greatest common divisor and the least common multiple of positive integers \(m\) and \(n\), respectively. The following result for GCD sums plays a key role in the proof of Theorem \ref{thm1}.
\begin{lem}
\label{GCD}
    Let $\mcm$ be any set of positive square-free integers with $|\mcm|=N$. Then as $N\to\infty$, we have
   $$\max_{|\mcm|=N}\sum_{m,n\in\mcm}\sqrt{\frac{(m,n)}{[m,n]}}=N\exp\bigg((2+o(1))\sqrt{\frac{\log N\log_3N}{\log_2N}}\bigg).$$
\end{lem}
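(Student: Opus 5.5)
The plan is to deduce the lemma from the work of de la Bretèche and Tenenbaum on GCD sums, \emph{Sommes de G\'al et applications} \cite{tenen2019galsum}, for the exponent $\alpha=\tfrac12$. The only extra point is to check that the extremal configurations can be taken to consist of square-free integers. Write
\[
S(\mcm):=\sum_{m,n\in\mcm}\sqrt{\frac{(m,n)}{[m,n]}},\qquad \Gamma(N):=\max_{|\mcm|=N}S(\mcm),
\]
where the maximum is over sets of positive integers, not necessarily square-free.

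\textbf{Upper bound.} Restricting to square-free sets can only decrease the maximum. It therefore suffices to quote their theorem
\[
\Gamma(N)\le N\exp\Big((2+o(1))\sqrt{\tfrac{\log N\log_3N}{\log_2N}}\Big),
\]
which comes from a Rankin-type argument applied to the associated quadratic form.

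\textbf{Lower bound.} I would redo the Bondarenko--Seip / de la Bretèche--Tenenbaum construction and observe that it lives on square-free integers. Set
\[
L:=\sqrt{\log N\log_2N/\log_3N}.
\]
Let $\mathcal P$ be the set of primes $p$ with $eL\log L<p\le \exp((\log L)^{2})\,L\log L$ (or the analogous range in \cite{tenen2019galsum}). Define a multiplicative $f$ supported on square-free integers with prime factors in $\mathcal P$ by
\[
f(p)=\frac{L}{\sqrt{p}\,(\log p-\log(L\log L))}\quad (p\in\mathcal P).
\]
Let $\mcm$ consist of the square-free $n$ composed of primes of $\mathcal P$ such that, for each $k$, the number of prime factors of $n$ in the $k$-th dyadic-logarithmic block $P_k$ of $\mathcal P$ lies in a prescribed window around its expected value $\sum_{p\in P_k}f(p)^2/(1+f(p)^2)$. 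Truncating $\mcm$ if necessary gives $|\mcm|\le N$, and padding with irrelevant square-free integers gives $|\mcm|=N$. All elements are square-free by construction.

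To bound $S(\mcm)$ from below, for each $m\in\mcm$ consider only the pairs $n=mb/a$ with $a\mid m$, $(b,m)=1$, $b$ square-free, $\omega(a)=\omega(b)$, and $a,b$ built from matched primes in the same blocks. Each such $n$ stays in $\mcm$ and contributes $(ab)^{-1/2}$. The resulting lower bound is
\[
S(\mcm)\ge |\mcm|\,\exp\big((2+o(1))\sqrt{\log N\log_3N/\log_2N}\big).
\]
The two factors of $L$ come from choosing which primes of $m$ to remove and which to add, using $\sum_{p\in\mathcal P}f(p)/\sqrt p\approx \log_3 N$ per unit of $L$. The proof is completed by the convexity of $x\mapsto\Gamma(x)/x$ on this scale, or more simply by the fact that $S$ restricted to subsets of size $N$ of a larger set of size $\asymp N$ keeps the same exponent.

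\textbf{Main obstacle.} The delicate step is the size control. One must show that the block-restricted set $\mcm$ has cardinality $N^{1+o(1)}$ (in fact at most $N$ after the choice of parameters) while almost every $m\in\mcm$ has the full expected number of neighbours $n=mb/a\in\mcm$. I would handle this as in \cite{tenen2019galsum}. Use Rankin's trick with the weights $f(n)^2$ to bound the mass of integers outside the windows. Then use a concentration estimate (Chebyshev's inequality for the number of prime factors in each block) to show that the swap $m\mapsto mb/a$ preserves membership for a proportion $1-o(1)$ of choices. Since the windows only constrain counts of prime factors, square-freeness causes no loss here.
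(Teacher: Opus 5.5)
Your overall plan matches the paper's. The paper's proof is only the citation \cite[Eq.~(1.5)]{tenen2019galsum}, and your upper bound is the same citation, since restricting to square-free sets can only lower the maximum. Your remark that the lower bound needs a square-free extremal family is a legitimate point the paper leaves implicit.

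The gap is in the lower-bound construction you actually write down, which does not give the claimed exponent. With $L=\sqrt{\log N\log_2N/\log_3N}$, starting the primes at $eL\log L$ puts the threshold in the wrong place:
\begin{itemize}
\item Your $f(p)=L/(\sqrt p(\log p-\log(L\log L)))$ exceeds $1$ for all $p$ up to $p\asymp L^2/(\log_2N)^2$.
\item Beyond that point the denominator $\log p-\log(L\log L)$ is already $\ge(\tfrac12+o(1))\log_2N$ and $\asymp\log p$. So your weights are effectively Soundararajan's $L/(\sqrt p\log p)$.
\item This gives $\sum_p f(p)/\sqrt p\asymp L/\log_2N$ and $\sum_p f(p)^2\log(1/f(p)^2)\asymp\log N/\log_3N$.
\item After rescaling $f$ so that $\mathcal M$ has size about $N$, the exponent is only $\asymp\sqrt{\log N/\log_2N}$; the factor $\sqrt{\log_3N}$ is lost.
\end{itemize}
The $\log_3N$ gain comes precisely from letting $\log p-\log p_0$ run from $O(1)$ up to $\asymp\log_2N$, with $p_0\asymp\log N\log_2N=L^2\log_3N$. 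This is the Bondarenko--Seip choice: $f(p)=L/(\sqrt p(\log p-\log(\log N\log_2N)))$ for $e\log N\log_2N<p\le\exp((\log_2N)^\gamma)\log N\log_2N$. With it, $\sum_p f(p)/\sqrt p\sim\gamma L\log_3N/\log_2N$, which is $\log_3N/\log_2N$ per unit of $L$, not ``$\log_3N$ per unit of $L$'' as you wrote. Your upper endpoint $\exp((\log L)^2)L\log L$ would also break the property $y_{\mathcal M}\le(\log N)^{1+o(1)}$ that the paper uses right after the lemma.

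Second, the size adjustment is not justified as stated.
\begin{itemize}
\item Discarding elements of a set of size $N^{1+o(1)}$ can destroy the neighbour counts. A random subset of relative density $\delta$ multiplies the normalised sum by about $\delta$, and $\delta=N^{-o(1)}$ may be far smaller than $\exp(-\sqrt{\log N})$.
\item Padding a set of size $N_0<N$ costs a factor $N_0/N$.
\item ``Convexity of $\Gamma(x)/x$'' is neither known nor what is needed.
\end{itemize}
The standard fix is to choose the parameters so that $N_0=|\mathcal M_0|\le N$ with $\log N_0\sim\log N$. Then note that $\mathcal M_0\cdot\{q_1,\dots,q_k\}$, with fresh primes $q_i$, is square-free, has size $kN_0$, and satisfies $S\ge k\,S(\mathcal M_0)$ from the diagonal pairs in the $q_i$. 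Combined with padding, this gives $\Gamma(N)/N\ge\frac12\,\Gamma(N_0)/N_0$ for $N\ge N_0$, in your unnormalised notation.

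Since the lemma is a quotation anyway, the cleanest route is the paper's: cite \cite{tenen2019galsum} and just observe that the extremal sets there, built as in Bondarenko--Seip, consist of square-free integers.
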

\begin{proof}
    This is \cite[Eq. (1.5)]{tenen2019galsum}.
\end{proof}
Moreover, if $P_+(n)$ denotes the largest prime divisor of $n$, the set \(\mcm\) is chosen so that
$$y_\mcm :=\max_{m\in\mcm}P_+(m)\le (\log N)^{1+o(1)}.$$

\section{Proof of Theorem \ref{thm1}}
\label{sec-proof-thm1}

Let \(X\) be large, and let \(N=\lfloor X^{1/4-\delta} \rfloor\) , where \(\delta>0\) is fixed and sufficiently small. Choose a set \(\mcm\) as in Lemma \ref{GCD}, and define the resonator 
\[
R_q = \sum_{m\in \mcm} \chi_{8q}(m)
\]
for each odd prime \(q\). Furthermore, to apply the resonance method, define
\begin{align*}
        S_1 &: = S_1(R_q,X) ={\sum_{q}}^\ast (\log q) R_q^2 \whp\Bg(\frac{q}{X}\Bg), \\
        S_2 &: = S_2(R_q,X) ={\sum_{q}}^\ast (\log q) L\Bg(\frac{1}{2},\chi_{8q}\Bg)R_q^2 \whp\Bg(\frac{q}{X}\Bg).
\end{align*}
Since \(L(1/2,\chi_{8q}) \in \mbr\), we have
\begin{align}
    \label{max1}
    \max_{X <q \le 2X}|L(\tfrac{1}{2},\chi_{8q})| \ge \frac{S_2}{S_1}.
\end{align}

We now derive a lower bound for the ratio \(S_2/S_1\) by estimating \(S_1\) and \(S_2\), respectively. Expanding \(R_q^2\) in the definition of \(S_1\), we obtain
\[
S_1={\sum_{q}}^\ast (\log q) R_q^2 \whp\Bg(\frac{q}{X}\Bg) = \sum_{m,n \in \mcm}{\sum_{q}}^\ast (\log q) \chi_{8q}(mn) \whp\Bg(\frac{q}{X}\Bg).
\]
Applying Lemma \ref{charsum} gives
\[
S_1=\sum_{m,n \in \mcm} \whp(1)X \mathds{1}_{mn=\square} + O \Bg(\sum_{m,n \in \mcm}X^{1/2+\varepsilon} \log(mn+2)\Bg).
\]
For any \(m \in \mcm\), the prime number theorem implies that
\begin{align}
    \label{logmupper}
    \log m \le \sum_{p \le y_\mcm}\log p \sim y_\mcm \le (\log N)^{1+o(1)}.
\end{align}
Thus, combining \(|\mcm| = N=\lfloor X^{1/4-\delta} \rfloor \) with \eqref{logmupper}, we deduce that \(\log(mn+2) \ll X^\varepsilon\), and 
\[
S_1= \sum_{\substack{m,n\in\mcm \\ mn=\square}}\whp(1)X +O\big(X^{1/2+2\varepsilon}N^2 \bg).
\]
As \(\mcm\) consists of square-free numbers, \(mn=\square\) implies \(m=n\). Hence,
\begin{align}
    \label{S1upper}
    S_1 = \whp(1)XN + O\big(X^{1/2+2\varepsilon}N^2 \bg) \le \bg(1+o(1)\bg)\whp(1)X.
\end{align}

We next turn our attention to \(S_2\). Similarly, expanding \(R_q^2\) and applying Lemma \ref{appox} to \(L(\tfrac12,\chi_{8q})\), we obtain
\begin{align*}
    S_2 &={\sum_{q}}^\ast (\log q) L\Bg(\frac{1}{2},\chi_{8q}\Bg)R_q^2 \whp\Bg(\frac{q}{X}\Bg) \\ 
    &= 2\sum_{m,n\in \mcm} \sum_{k \ge 1}\frac{1}{\sqrt{k}} {\sum_{q}}^\ast (\log q)\chi_{8q}(kmn)\whp\Bg(\frac{q}{X}\Bg)\omega\Bg(\frac{k}{\sqrt{q}}\Bg).
\end{align*}
For the innermost sum, using Lemma \ref{charsum} and the definition of \(\Phi\), we get
\begin{align*}
    &\quad{\sum_{q}}^\ast (\log q)\chi_{8q}(kmn)\whp\Bg(\frac{q}{X}\Bg)\omega\Bg(\frac{k}{\sqrt{q}}\Bg) \\
    &= \int_X^{2X} \omega\Bg(\frac{k}{\sqrt{q}}\Bg) \mmd \bg(\whp(1)t\mathds{1}_{kmn=\square} + O\bg(t^{1/2+\varepsilon} \log(kmn+2)\bg) \\
    &=\whp(1)X\mathds{1}_{kmn=\square} \int_1^2\omega\Bg(\frac{k}{\sqrt{Xu}}\Bg)\mmd u+\mce,
\end{align*}
where the error term \(\mce\) satisfies
\[
\mce \ll X^{1/2+\varepsilon}\log(kmn+2)\Bg(\Bg|\omega\Bg(\frac{k}{\sqrt{X}}\Bg) \Bg| +\Bg|\omega\Bg(\frac{k}{\sqrt{2X}}\Bg) \Bg|+\int_X^{2X}\Bg| \omega^\prime\Bg(\frac{k}{\sqrt{t}}\Bg)\Bg|\frac{1}{2t^{3/2}}\mdt\Bg).
\]
By Lemma \eqref{charsum}, \(\omega(\xi)\) and \(\omega^\prime(\xi)\) decay exponentially. We may therefore restrict the sum over \(k\) to \(k \le X^{1/2+\varepsilon}\), which yields
\[
\mce \ll X^{1/2+\varepsilon}\log(kmn+2) \ll X^{1/2+\varepsilon}.
\]
Here, we use \eqref{logmupper} again. We then have
\begin{align*}
    S_2 = \whp(1)X\sum_{m,n\in \mcm} \sum_{\substack{k \ge 1 \\ kmn=\square}} \frac{1}{\sqrt{k}}\int_1^2\omega\Bg(\frac{k}{\sqrt{Xu}}\Bg)\mmd u +O\Bg(X^{1/2+\varepsilon} \sum_{k \le X^{1/2+\varepsilon}}\frac{1}{\sqrt{k}}\sum_{m,n\in\mcm}1\Bg).
\end{align*}
Trivially, partial summation shows that
\[
\sum_{k \le X^{1/2+\varepsilon}}\frac{1}{\sqrt{k}} \ll X^{1/4+\varepsilon}.
\]
Therefore, using \(|\mcm| = N\), we obtain
\begin{align}
    \label{s2eq}
    S_2 = \whp(1)X\sum_{m,n\in \mcm} \sum_{\substack{k \ge 1 \\ kmn=\square}} \frac{1}{\sqrt{k}}\int_1^2\omega\Bg(\frac{k}{\sqrt{Xu}}\Bg)\mmd u +O\bg(X^{3/4+2\varepsilon}N^2\bg).
\end{align}

Let \(\mcd\) denote the main term on the right-hand side of \eqref{s2eq}. Since \(\omega(\xi) \ge 0\), we may retain only those terms satisfying
\[
k = \frac{[m,n]}{(m,n)}
\]
and obtain the following crude lower bound for \(\mcd\):
\begin{align}
    \label{Dlower1}
    \mcd \ge \whp(1)X \sum_{m,n\in \mcm}\sqrt{\frac{(m,n)}{[m,n]}} \int_1^2\omega\Big(\frac{[m,n]}{(m,n)\sqrt{Xu}} \Big)\mmd u .
\end{align}
To employ the asymptotic formula of \(\omega(\xi)\) when \(\xi\) is small given in Lemma \ref{appox}, we restrict the sum to \([m,n]/(m,n) \le X^\varepsilon\) and obtain
\[
\mcd \ge \whp(1)X\sum_{\substack{m,n\in\mcm \\ [m,n]/(m,n) \le X^\varepsilon}} \sqrt{\frac{(m,n)}{[m,n]}} \big(1+O\big((X^{-1/2+\varepsilon})^{1/2-\varepsilon}\big)\big).
\]
Following the argument in \cite[p. 25]{tenen2019galsum}, we have, for each fixed \(m \in \mcm\),
\[
\sum_{n \in \mcm}\Big( \frac{(m,n)}{[m,n]} \Big)^{1/3} \le \prod_{p \le y_\mcm}\Big(1+\frac{2}{p^{1/3}-1}\Big) \ll \exp \big(y_\mcm^{2/3}\big).
\]
Rankin's trick then gives the following estimate for the restricted GCD sum:
\begin{align*}
    \sum_{\substack{m,n\in\mcm \\ [m,n]/(m,n) \le X^\varepsilon}} \sqrt{\frac{(m,n)}{[m,n]}} &= \Big(\sum_{m,n\in\mcm} - \sum_{\substack{m,n\in\mcm \\ [m,n]/(m,n) > X^\varepsilon}}\Big) \sqrt{\frac{(m,n)}{[m,n]}} \nonumber \\
    & \ge \sum_{m,n\in\mcm}  \sqrt{\frac{(m,n)}{[m,n]}} -X^{-\varepsilon/6}\sum_{m,n\in\mcm} \Big(\frac{(m,n)}{[m,n]}\Big)^{1/3}\nonumber \\
    & \gg\sum_{m,n\in\mcm}  \sqrt{\frac{(m,n)}{[m,n]}} -X^{-\varepsilon/6}|\mcm| \exp \big(y_\mcm^{2/3}\big).
\end{align*}
Then, combining the fact that \(y_\mcm \le  (\log N)^{1+o(1)}\) and Lemma \ref{GCD}, we deduce that
\begin{align}
    \label{gcdlower}
 \sum_{\substack{m,n\in\mcm \\ [m,n]/(m,n) \le X^\varepsilon}} \sqrt{\frac{(m,n)}{[m,n]}} \ge N  \exp\bigg(\bg(2+o(1)\bg)\sqrt{\frac{\log N\log_3N}{\log_2N}}\bigg).
\end{align}
Substituting \eqref{gcdlower} into \eqref{Dlower1}, we get 
\begin{align}
    \label{Dlower2}
    \mcd \ge \bg(1+o(1)\bg)\whp(1)XN  \exp\bigg(\bg(2+o(1)\bg)\sqrt{\frac{\log N\log_3N}{\log_2N}}\bigg).
\end{align}

Combining \eqref{S1upper}, \eqref{s2eq} and \eqref{Dlower2} shows that
\[
\frac{S_2}{S_1} \ge \exp\bigg((2+o(1))\sqrt{\frac{\log N\log_3N}{\log_2N}}\bigg).
\]
Then, inserting this lower bound for \(S_2/S_1\) into \eqref{max1}, we have
\begin{align*}
    \max_{ X < |q| \le 2X}| L( \tfrac{1}{2},\chi_{8q}) | & \ge\exp\bigg((2+o(1))\sqrt{\frac{\log N\log_3N}{\log_2N}}\bigg) \\
    & \ge  \exp\bigg(\bigg(2\sqrt{\frac{1}{4}-\delta}+o(1)\bigg)\sqrt{\frac{\log X\log_3X}{\log_2X}}\bigg).
\end{align*}
Choosing \(\delta \to 0+\), we complete the proof of Theorem \ref{thm1}.

	\section*{Acknowledgements}
	Z. Dong is supported by the National
	Natural Science Foundation of China (Grant No. 	1240011770). W. Wang is supported by the National
	Natural Science Foundation of China (Grant No. 1250012812). H. Zhang is supported by the Fundamental Research Funds for the Central Universities (Grant No. 531118010622), the National
	Natural Science Foundation of China (Grant No. 1240011979) and the Hunan Provincial Natural Science Foundation of China (Grant No. 2024JJ6120).
	
	\bibliographystyle{siam}
    \bibliography{reference}
\end{document}